\RequirePackage{fix-cm}
\documentclass[smallextended]{svjour3}       % onecolumn (second format)
\smartqed  % flush right qed marks, e.g. at end of proof
\usepackage{graphicx}
\usepackage{multirow}
\usepackage{amsmath,amssymb,amsfonts}
\usepackage{mathrsfs}
\usepackage[title]{appendix}
\usepackage{xcolor}
\usepackage{textcomp}
\usepackage{manyfoot}
\usepackage{booktabs}
\usepackage{algorithm}
\usepackage{algorithmicx}
\usepackage{algpseudocode}
\usepackage{listings}
\usepackage{bm}
\usepackage{mathtools}
\usepackage{etoolbox}

\newcommand\argmin{\mathop{\rm argmin}}

\newcommand\dist{\mathop{\rm dist}}

\newcommand\prox{\mathop{\rm prox}}
\newcommand\proj{\mathrm{proj}}
\newcommand\conv{\mathop{\rm conv}}

\newcommand\bz{\bm{0}}

\newcommand{\blambda}{\bm{\lambda}}
\newcommand{\bmu}{\bm{\mu}}

\renewcommand{\a}{\bm{a}}
\renewcommand{\b}{\bm{b}}
\renewcommand{\c}{\bm{c}}
\renewcommand{\d}{\bm{d}}
\newcommand{\R}{{\mathbb{R}}}
\newcommand\p{\bm{p}}
\newcommand\q{\bm{q}}
\renewcommand\r{\bm{r}}

\renewcommand\u{\bm{u}}
\renewcommand\v{\bm{v}}
\newcommand\w{\bm{w}}
\newcommand\x{\bm{x}}
\newcommand\y{\bm{y}}
\newcommand\z{\bm{z}}

\newcommand\e{\bm{e}}
\newcommand\mA{\mathcal{A}}
\newcommand\mC{\mathcal{C}}

\renewcommand\O{\mathcal{O}}

\newcommand\Id{\mathop{\mathrm{Id}}}
\newcommand\cl{\mathop{\mathrm{cl}}}
\newcommand\range{\mathop{\mathrm{range}}}

\AtEndEnvironment{proof}{\phantom{}\qed}

\renewcommand{\makeheadbox}{}
\makeatletter
\patchcmd{\@maketitle}
  {\hrule\@height0.35mm\noindent}
  {}
  {}
  {\PackageWarning{esp}{Could not remove title rule}}
\makeatother

\begin{document}

% \title{Ellipsoid Separation in Data-Dependent Time via Infimal Displacement Vectors\thanks{This work was supported in part by a Discovery Grant from
% the Natural Sciences and Engineering Research Council
% of Canada (NSERC). In the event that this manuscript is accepted, the authors reserve the right to post the final accepted manuscript on arxiv.org without payment of a fee and notwithstanding any clause in the publisher's contract in order to comply with NSERC's open access policy.}}
\title{Ellipsoid Separation in Data-Dependent Time via Infimal Displacement Vectors\thanks{This work was supported in part by a Discovery Grant from
the Natural Sciences and Engineering Research Council
of Canada (NSERC).}}

\titlerunning{Ellipsoid Separation in Data-Dependent Time via Infimal Displacement Vectors}        % if too long for running head

\author{Matthew Hough         \and
        Stephen A. Vavasis
}

%\authorrunning{Short form of author list} % if too long for running head

\institute{Matthew Hough \and Stephen A. Vavasis \at
    Department of Combinatorics \& Optimization,
    University of Waterloo,
    Waterloo, ON N2L 3G1, Canada\\
    \email{mhough@uwaterloo.ca}\\
    \email{vavasis@uwaterloo.ca}
}

% \date{Received: date / Accepted: date}
\date{}

\maketitle

\begin{abstract}
The infimal displacement vector of the nonexpansive operator underlying the primal-dual hybrid gradient algorithm (PDHG) is known to encode useful information for infeasible optimization problems. In particular, for the ellipsoid separation problem (ESP), where the goal is to find a separating hyperplane between two collections of ellipsoids, the infimal displacement vector corresponding to an infeasible second-order cone program (SOCP) encodes a separating hyperplane. We show FISTA computes a sequence of approximations to the dual component of the infimal displacement vector and establish that a solution to the ESP can be obtained in $\O(\delta_{\mA}^{-2})$ iterations. Here $\delta_{\mA}$ denotes the distance between the convex hulls of the two ellipsoid collections.
\keywords{primal-dual hybrid gradient \and infimal displacement vector  \and FISTA \and ellipsoid separation problem}
% \PACS{PACS code1 \and PACS code2 \and more}
% \subclass{MSC code1 \and MSC code2 \and more}
\end{abstract}

\section{Introduction}
Recent work has shown that when an optimization problem is infeasible or unbounded, the iterates of the primal-dual hybrid gradient algorithm (PDHG)
can encode useful information through the \emph{infimal displacement vector} (IDV) \cite{Applegate2024,Jiang2023}. In particular, in \cite{Applegate2024} the authors show that the IDV is able to certify infeasibility and unboundedness of linear programs.
Jiang et al. \cite{Jiang2023} further show that the IDV associated with an infeasible second-order cone program (SOCP) can be used to solve the ellipsoid separation problem (ESP), which seeks a hyperplane that strictly separates two finite collections of ellipsoids. This problem arises in robust classification, where each data point is represented by an ellipsoid describing uncertainty in its location \cite{Shivaswamy2006}.

For the infeasible SOCP considered in \cite[Section~6.2]{Jiang2023}, differences between successive PDHG iterates converge to the IDV, and thus solve the ESP asymptotically. In this paper, we reformulate the SOCP solved by PDHG as a least-squares problem with second-order cone constraints
so that it can be solved by FISTA, and show that the FISTA residuals converge to the dual component of the IDV which encodes a separating hyperplane for the ESP.
We establish that a separating hyperplane can be obtained in finitely many iterations and derive a data-dependent iteration complexity for obtaining such a hyperplane. In particular, we show that a separating hyperplane can be obtained in $\O(\delta_{\mA}^{-2})$ iterations, where $\delta_{\mA}$ is the distance between the convex hulls of the two ellipsoid collections. Thus, the number of iterations required depends on the geometry of the problem, with better-separated ellipsoids admitting faster convergence to a separator, highlighting a novel role for the IDV.

\section{Primal-dual hybrid gradient}
The primal-dual hybrid gradient algorithm (PDHG) is an algorithm for solving saddle point problems of the form
\begin{equation} \label{prob:sp}
    \inf_{\x \in \R^n} \sup_{\y \in \R^m} \y^TA\x + f(\x) - g(\y)
\end{equation}
where $A : \R^n \to \R^m$ is linear and $f$ and $g$ are proper, lower semicontinuous, convex functions. Let $(\x_0,\y_0) \in \R^n \times \R^m$ and suppose the parameters $\sigma,\tau > 0$ are chosen to satisfy $\sigma\tau\|A\|^2 < 1$. PDHG generates its iterates by repeatedly applying the operator $T$:
\[
    (\x_{k+1}, \y_{k+1}) = T(\x_k,\y_k) = \begin{pmatrix}
        \prox_{\sigma f}(\x_k - \sigma A^T\y_k)\\
        \prox_{\tau g}(\y_k + \tau A(2\prox_{\sigma f}(\x_k - \sigma A^T\y_k) - \x_k))
    \end{pmatrix}.
\]
Under appropriate constraint qualifications, $(\x^*,\y^*)$ is a fixed point of $T$ if and only if $(\x^*,\y^*)$ is optimal for \eqref{prob:sp} \cite[Proposition~3.1]{Jiang2023}. In the inconsistent case, where $\bz \notin \range (\Id - T)$, the iterates $(\x_k,\y_k)$ diverge \cite{Reich1,Reich2}. However, the successive differences $(\x_{k},\y_{k}) - (\x_{k+1},\y_{k+1})$ converge to the IDV $\v = (\v_R, \v_D)$, defined as
\[
    \v := \argmin_{\w \in \cl\left(\range(\Id - T)\right)} \|\w\|_M,
\]
where $\|\w\|_M = \sqrt{\w^TM\w}$ is the norm induced by the matrix
\[
    M = \begin{pmatrix}
        \sigma^{-1}I & -A^T\\
        -A & \tau^{-1}I
    \end{pmatrix}.
\]
The assumption $\sigma\tau\|A\|^2 < 1$ ensures that $M$ is positive definite.

\section{Ellipsoid separation as an SOCP} \label{sec:esp}
Given two finite collections of ellipsoids in $\R^d$,
\[
    \{C_1,\ldots,C_j\}, \qquad \{D_1,\ldots,D_l\},
\]
the ellipsoid separation problem asks whether there exists a hyperplane strictly separating the two collections. We write the ellipsoids as
\[
\begin{aligned}
    C_i &= \{\z:\|A_i^{-1}(\z-\c_i)\|\leq 1\}, &&i=1,\ldots,j,\\
    D_i &= \{\z:\|B_i^{-1}(\z-\d_i)\|\leq 1\}, &&i=1,\ldots,l,
\end{aligned}
\]
where $A_i, B_i \in \R^{d\times d}$ are nonsingular and $\c_i,\d_i\in\R^d$ are the ellipsoid centers. We make the assumption throughout that the data is scaled so that
\begin{equation} \label{ass} \tag{A}
    \max\left\{\max_i\{\|\c_i\|,\|A_i\|\}, \max_i\{\|\d_i\|,\|B_i\|\}\right\}=1.
\end{equation}
We seek $\w\in\R^d$ and $s,t\in\R$ such that
\[
\begin{aligned}
    \w^T\z &\leq -s, &&\forall \z\in C_i,\quad i=1,\ldots,j,\\
    \w^T\z &\geq t, &&\forall \z\in D_i,\quad i=1,\ldots,l,
\end{aligned}
\]
with $s+t>0$. In this case, any hyperplane $\{\z:\w^T\z=h\}$ with $h\in(-s,t)$ strictly separates the two collections. By \cite[Lemma~6.18]{Jiang2023}, these halfspace containment conditions are equivalent to
\[
    \begin{pmatrix}
        s+\c_i^T\w\\ A_i^T\w
    \end{pmatrix} \in -C_2^{d+1}, \qquad
    \begin{pmatrix}
        t-\d_i^T\w\\ B_i^T\w
    \end{pmatrix} \in -C_2^{d+1},
\]
for all $i=1,\ldots,j$ and $i=1,\ldots,l$, respectively, where $C_2^{d+1}$ denotes the second-order cone. Consequently, the ellipsoid separation problem can be formulated as the following homogeneous second-order cone program:
\begin{equation}\label{prob:socp-esp-primal}
\begin{array}{rl}
  \max_{s,t,\w} & s+t \\
    \mbox{s.t.} & \mA^T\begin{pmatrix} s \\ t \\ \w\end{pmatrix}
    \preceq_{\mC} \bz,
\end{array}
\end{equation}
where the matrix $\mA^T\in\R^{ (j+l)(d+1)\times(d+2)}$ is
  \[
    \mA^T := \begin{pmatrix}
      \bar{A}_1^T\\
      \vdots\\[4pt]
      \bar{A}_j^T\\[4pt]
      \bar{B}_1^T\\
      \vdots\\[4pt]
      \bar{B}_l^T
    \end{pmatrix},
  \]
  with
  \[
    \bar{A}_i^T:=
    \begin{pmatrix}
    1 & 0 & \c_i^T \\
    \bz & \bz & A_i^T
    \end{pmatrix},\quad
    \bar{B}_i^T:=
    \begin{pmatrix}
        0 & 1 & -\d_i^T \\
        \bz & \bz & B_i^T
    \end{pmatrix},
  \]
and the notation $\a \preceq_{\mC} \b$ means that $\a - \b \in -\mC$, where
  \[
    \mC:=\underbrace{C_2^{d+1}\times\cdots\times C_2^{d+1}}_{(j+l) \ \mathrm{times}}.
  \]
Due to homogeneity, the objective does not yield a finite maximum; rather, feasibility with $s + t > 0$ certifies that the halfspaces are disjoint and a separator exists. Otherwise, the optimum is $0$, certifying that no separating hyperplane exists.

\section{FISTA for ellipsoid separation} \label{sec:esp-fista}
It is not apparent how to apply FISTA \cite{FISTA,Nesterov2004} directly to \eqref{prob:socp-esp-primal}, since there is no closed-form expression to project
onto the constraint set $\{(s,t,\w) : \mA^T(s,t,\w) \preceq_{\mC} \bz\}$. It turns out that the dual SOCP admits a constraint set that can be efficiently
projected onto. Introduce the dual variables $\lambda_1,\hdots,\lambda_j,\mu_1,\hdots,\mu_l \in \R$ and $\p_1,\hdots,\p_j,\q_1,\hdots,\q_l \in \R^d$, and define
\[
  \x := \begin{pmatrix}
    \lambda_1\\
    \p_1\\
    \vdots\\
    \lambda_j\\
    \p_j\\
    \mu_1\\
    \q_1\\
    \vdots\\
    \mu_l\\
    \q_l
  \end{pmatrix}.
\]
The dual SOCP is then
\begin{equation}\label{prob:socp-esp-dual}
\begin{array}{rl}
  \min_{\x} & 0\\
\mbox{s.t.} & \mA\x = \b\\
&\x \succeq_{\mC} \bz,
\end{array}
\end{equation}
where $\b = (1,1,\bz)$. Written explicitly, the constraint $\mA\x = \b$ becomes
\begin{equation} \label{eqn:esp-explicit-constraint}
  \begin{split}
    &\lambda_1+\cdots+\lambda_j=1,\\
    &\mu_1+\cdots+\mu_l=1, \\
    &\sum_{i=1}^j\left(\lambda_i\c_i+A_i\p_i\right)-\sum_{i=1}^l\left(\mu_i\d_i-B_i\q_i\right)=\bz,
  \end{split}
\end{equation}
while the constraint $\x \succeq_{\mC} \bz$ becomes
\begin{align*}
  \Vert \p_i\Vert&\le \lambda_i &&\forall i=1,\ldots, j \\
  \Vert \q_i\Vert&\le \mu_i &&\forall i=1,\ldots, l.
\end{align*}
The dual feasibility problem \eqref{prob:socp-esp-dual} has a simple geometric interpretation. The constraint $\mA\x=\b$, together with $\x\in\mC$, is feasible if and only if
\[
    \conv\left(C_1 \cup C_2 \cup \cdots \cup C_j\right) \cap \conv\left(D_1\cup D_2 \cup \cdots \cup D_l\right) \neq \emptyset.
\]
Consequently, the ellipsoid separation problem has a solution if and only if the dual conic feasibility problem is infeasible. To obtain a separating hyperplane, we apply FISTA to the problem
\begin{equation}\label{prob:socp-esp-dual2}
\begin{array}{rl}
  \min_{\x} & f(\x):=\frac{1}{2}\|\mA\x-\b\|^2\\
\mbox{s.t.} & \x \succeq_{\mC} \bz.
\end{array}
\end{equation}
This formulation is amenable to FISTA because $\nabla f$ is Lipschitz continuous with constant $L = \|\mA\|^2$ and projection onto the product of second-order cones $\mC$ can be computed in linear time (cf. \cite[Example 6.37]{Beck2017}).

Let $\x_k$ denote the iterates generated by FISTA and define the residuals
\[
\v_k := \b-\mA\x_k.
\]
We next establish how these residuals can be used to construct a separating hyperplane, and subsequently bound the number of iterations required to obtain one.

In the following analysis, we consider the infimal displacement vector of PDHG applied to \eqref{prob:socp-esp-dual} with $\tau = 1$ and $\sigma > 0$ satisfying $\sigma\|\mA\|^2 < 1$.

\begin{theorem} \label{thm:pdhg-to-fista}
    For any minimizer $\bar{\x}$ of \eqref{prob:socp-esp-dual2}, the dual component of the PDHG infimal displacement vector, $\v_D$, satisfies
    \[
        \v_D = \b - \mA\bar{\x}.
    \]
    Furthermore, the FISTA residuals converge to the dual part of the IDV: $\v_k \to \v_D$.
\end{theorem}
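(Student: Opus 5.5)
We will identify $\v_D$ with the projection of $\b$ onto the closed convex set $\cl(\mA\mC)$, and then read both claims off this characterization.

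\emph{Step 1: the PDHG operator.} Write \eqref{prob:socp-esp-dual} as a saddle problem of the form \eqref{prob:sp}. Take $f(\x)=\iota_{\mC}(\x)$ and $g(\y)=\b^T\y$, so that $\sup_{\y}\y^T(\mA\x)-\b^T\y$ enforces $\mA\x=\b$. Then $\prox_{\sigma f}=\proj_{\mC}$ and $\prox_{g}(\y)=\y-\b$ (with $\tau=1$). The operator becomes
\[
T(\x,\y)=\bigl(\x^+,\ \y+\mA(2\x^+-\x)-\b\bigr),\qquad \x^+=\proj_{\mC}(\x-\sigma\mA^T\y).
\]

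\emph{Step 2: the IDV via known structure.} For such conic problems the characterization in \cite{Jiang2023} gives
\[
\v_D=\argmin\{\|\w\|:\w\in \b-\cl(\mA\mC)\}\quad\text{and}\quad \v_R=\bz.
\]
If we cannot cite this directly, we derive it. For $\w=(\x,\y)-T(\x,\y)$, the dual component is $\b-\mA\x^+ +\mA(\x-\x^+)$ and the primal component is $\x-\x^+$. With $\v_R=\bz$, the $M$-norm $\|\w\|_M^2=\|\w_R\|^2/\sigma-2\w_D^T\mA\w_R+\|\w_D\|^2$ reduces to $\|\w_D\|^2$.

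Attainability and the lower bound are the delicate part, and this is the main obstacle. For attainability, take $\x_0=\bar\x$ with $\y$ chosen along $-\v_D$ direction: by the optimality of $\bar\x$, $\mA^T(\b-\mA\bar\x)\in -N$-type conditions give $\bar\x=\proj_{\mC}(\bar\x+\sigma\mA^T\v_D)$. Then the displacement from $(\bar\x,-\v_D)$ is $(\bz,\v_D)$, exactly. For the lower bound, we show every displacement has $M$-norm at least $\|\b-\proj_{\cl\mA\mC}\b\|$. We would argue this by the standard asymptotic route: $\v=\lim_k \z_k/k$ for any PDHG trajectory, and along trajectories $\x_k\in\mC$, so $\y_k/k\in\b-\cl(\mA\mC)$ in the limit. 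We would also invoke minimality to force $\v_R=0$ (since $\x_k/k\to \v_R$ and the cost of a nonzero $\v_R$ in $\|\cdot\|_M$ cannot be beneficial by positive definiteness combined with a Pythagorean argument).

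\emph{Step 3: minimizers of \eqref{prob:socp-esp-dual2}.} $\bar\x$ minimizes $\tfrac12\|\mA\x-\b\|^2$ over $\mC$ if and only if $\mA\bar\x$ is a nearest point of $\mA\mC$ to $\b$. Since projection onto the closed convex set $\cl(\mA\mC)$ is unique, $\b-\mA\bar\x=\b-\proj_{\cl\mA\mC}(\b)=\v_D$ for every minimizer. This step assumes a minimizer exists, as the theorem does. Note that $\mA\mC$ is closed here, since $\mA\x=\bz$ with $\x\in\mC$ forces $\lambda=\mu=0$ and hence $\x=\bz$; this pointedness gives closedness of the image cone.

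\emph{Step 4: convergence of FISTA.} FISTA satisfies $f(\x_k)\to f^*=\tfrac12\|\v_D\|^2$, at rate $\O(1/k^2)$. Let $\u=\mA\bar\x$. For any $\x\in\mC$, strong convexity of $\tfrac12\|\cdot-\b\|^2$ together with the projection inequality $(\b-\u)^T(\mA\x-\u)\le0$ gives
\[
\|\mA\x-\u\|^2\le 2(f(\x)-f^*).
\]
Hence $\|\v_k-\v_D\|=\|\mA\x_k-\u\|\to0$, with rate $\O(1/k)$, which completes the proof.
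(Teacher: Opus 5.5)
Your proposal follows essentially the same route as the paper. You cite Jiang et al.\ for $\v_D=\b-\proj_{\mA(\mC)}(\b)$, with $\mA(\mC)$ closed because $\ker\mA\cap\mC=\{\bz\}$ (which also guarantees that a minimizer exists). You then get the first claim from uniqueness of the projection, and $\v_k\to\v_D$ from the projection inequality $f(\x)-f^*\ge\frac12\|\mA\x-\mA\bar{\x}\|^2$ combined with FISTA's objective-value convergence. The paper does exactly this.

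Your optional self-contained derivation of the IDV formula is only a sketch, but it is not needed, since the paper relies on the same citation. If you want to complete it, two points need fixing. First, $\v_R=\bz$ should come from $\y_k/k^2\to\bz$, which forces $\mA\v_R=\bz$; together with $-\v_R\in\mC$ this gives $\v_R=\bz$. Second, the relevant limit is $(\y_0-\y_k)/k$, not $\y_k/k$.
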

\begin{proof}
  Jiang et al. note that $\ker(\mA) \cap \mC = \{\bz\}$ \cite[Section~6.2]{Jiang2023}, so we may take $\tau = 1$ in \cite[Lemma~6.8 (vi)]{Jiang2023} to obtain  
  \begin{equation} \label{eqn:vd-proj}
    \v_D = \proj_{\b - \mA(\mC)}(\bz) = \proj_{(\mA(\mC))^{\circ}}(\b).
  \end{equation}
  In particular, a minimizer $\bar{\x}$ of \eqref{prob:socp-esp-dual2} exists, and every such minimizer satisfies $\v_D=\b-\mA\bar{\x}$. For any $\z \in \mA(\mC)$, the first-order optimality conditions of the projection give
  \begin{equation} \label{eqn:proj-ineq}
    \v_D^T\left(\v_D - (\b - \z)\right) \leq 0.
  \end{equation}
  We may write
  \[
    \lVert\mA\x - \b\rVert^2 - \lVert\mA\bar{\x} - \b\rVert^2 = \lVert\mA\x - \mA\bar{\x}\rVert^2 + 2(\mA\bar{\x} - \b)^T(\mA\x - \mA\bar{\x}),
  \]
  and substitute $\b - \mA\bar{\x} = \v_D$ to obtain
  \[
    \lVert\mA\x - \b\rVert^2 - \lVert\mA\bar{\x} - \b\rVert^2 = \lVert\mA\x - \mA\bar{\x}\rVert^2 - 2\v_D^T(\v_D - (\b - \mA\x)).
  \]
  Let $\v := \b - \mA\x$. Applying \eqref{eqn:proj-ineq} gives the inequality:
  \begin{align*}
    \lVert\mA\x - \b\rVert^2 - \lVert\mA\bar{\x} - \b\rVert^2 &= \lVert\mA\x - \mA\bar{\x}\rVert^2 - 2\v_D^T(\v_D - (\b - \mA\x))\\
    &\geq \lVert\mA\x - \mA\bar{\x}\rVert^2\\
    &= \lVert\v - \v_D\rVert^2.
  \end{align*}
  Taking $\x = \x_k$ where $\x_k$ represents the $k$th FISTA iterate, we may use the fact that the iterates of FISTA converge in objective value, so $\v_k \to \v_D$.
\end{proof}
The preceding theorem establishes that the FISTA residuals converge to the dual component of the PDHG infimal displacement vector. The significance of this result is that $\v_D$ encodes a separating hyperplane whenever the two collections of ellipsoids are strictly separable. We show this in the following lemma.

\begin{lemma} \label{lem:vd-separates}
  Suppose the two collections of ellipsoids are strictly separable, and denote
  \[
    \v_D = \begin{pmatrix}
        s^*\\
        t^*\\
        \w^*
    \end{pmatrix}.
  \]
  Then $s^* + t^* = \|\v_D\|^2 > 0$, and
  \begin{align*}
      &(\w^*)^T\z \leq -s^*,\qquad\forall \z \in C_1 \cup \hdots \cup C_j\\
      &(\w^*)^T\z \geq t^*,\qquad\forall \z \in D_1 \cup \hdots \cup D_l.
  \end{align*}
  That is, any hyperplane $\{\z : (\w^*)^T\z = h\}$ with $h \in (-s^*,t^*)$ strictly separates the two collections.
\end{lemma}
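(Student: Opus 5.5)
The plan is to read everything off the characterization \eqref{eqn:vd-proj} from the proof of Theorem~\ref{thm:pdhg-to-fista}: $\v_D = \proj_{K^\circ}(\b)$, where $K := \mA(\mC)$ is a closed convex cone and $K^\circ$ is its polar. By the Moreau decomposition, $\b = \proj_K(\b) + \proj_{K^\circ}(\b)$ with the two parts orthogonal. Hence $\v_D^T(\b - \v_D) = 0$, i.e.\ $\b^T\v_D = \|\v_D\|^2$. Since $\b = (1,1,\bz)$, we have $\b^T\v_D = s^* + t^*$, which gives $s^*+t^* = \|\v_D\|^2$.

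For strict positivity, I would argue by contradiction. If $\v_D = \bz$, then $\b = \proj_K(\b) \in K$, so $\b = \mA\x$ for some $\x \in \mC$. This makes the dual problem \eqref{prob:socp-esp-dual} feasible, which by the discussion in Section~\ref{sec:esp-fista} means the convex hulls of the two collections intersect. That contradicts strict separability, so $\v_D \neq \bz$ and $\|\v_D\|^2>0$.

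For the containment inequalities, I would use $\v_D \in K^\circ$: for every $\x \in \mC$ we have $\v_D^T\mA\x \le 0$, i.e.\ $(\mA^T\v_D)^T\x \le 0$. Because $\mC$ is self-dual, this says $\mA^T\v_D \in -\mC$, that is, $\mA^T(s^*,t^*,\w^*)^T \preceq_{\mC} \bz$. Blockwise this reads
\[
\begin{pmatrix} s^*+\c_i^T\w^* \\ A_i^T\w^*\end{pmatrix} \in -C_2^{d+1},\qquad
\begin{pmatrix} t^*-\d_i^T\w^* \\ B_i^T\w^*\end{pmatrix} \in -C_2^{d+1}.
\]
By \cite[Lemma~6.18]{Jiang2023}, as recalled in Section~\ref{sec:esp}, these are exactly the halfspace containments $(\w^*)^T\z \le -s^*$ on each $C_i$ and $(\w^*)^T\z \ge t^*$ on each $D_i$. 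If one prefers a direct check, write $\z = \c_i + A_i\u$ with $\|\u\|\le 1$. Then $(\w^*)^T\z \le \c_i^T\w^* + \|A_i^T\w^*\| \le -s^*$, and similarly for the $D_i$.

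The separating-hyperplane conclusion then follows immediately from $s^*+t^*>0$. Each step here is routine. The only points needing care are that $K = \mA(\mC)$ is closed, so that Moreau's decomposition applies and $\proj_K(\b) \in K$; this follows from $\ker(\mA)\cap\mC=\{\bz\}$, as used in Theorem~\ref{thm:pdhg-to-fista}. The other point is using self-duality of the second-order cone to translate the polar condition into the conic inequality. I expect the closedness of $K$ to be the main subtlety, and I would cite the standard result that a linear image of a closed convex cone is closed whenever no nonzero direction of the cone lies in the kernel.
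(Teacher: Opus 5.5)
Your proof is correct and follows the same argument as the paper. You establish $\b^T\v_D=\|\v_D\|^2$, then use $\v_D\in(\mA(\mC))^\circ$ and self-duality to get $\mA^T\v_D\in-\mC$, and finish with \cite[Lemma~6.18]{Jiang2023}. The only differences are that you derive the identity from Moreau's decomposition instead of citing \cite[Lemma~6.8 (viii)]{Jiang2023}, and you explicitly justify $\v_D\neq\bz$ via infeasibility of \eqref{prob:socp-esp-dual}, a step the paper leaves implicit.
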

\begin{proof}
    Taking $\tau = 1$ in \cite[Lemma~6.8 (viii)]{Jiang2023} gives $\|\v_D\|^2 = \b^T\v_D$. But $\b = (1,1,\bz)$, so
    \[
        \|\v_D\|^2 = \b^T\v_D = s^* + t^*.
    \]
    By \eqref{eqn:vd-proj}, $\v_D \in (\mA(\mC))^{\circ}$, thus $\mA^T\v_D \in \mC^{\circ}$. Therefore $\v_D$ is feasible for \eqref{prob:socp-esp-primal} with $s^* + t^* > 0$. The constraints of \eqref{prob:socp-esp-primal} encode
    \[
        \begin{pmatrix}
            s+\c_i^T\w\\ A_i^T\w
        \end{pmatrix} \in -C_2^{d+1}, \qquad
        \begin{pmatrix}
            t-\d_i^T\w\\ B_i^T\w
        \end{pmatrix} \in -C_2^{d+1},
    \]
    for all $i=1,\ldots,j$ and $i=1,\ldots,l$, respectively. The conclusion follows by the equivalence between the halfspace containment conditions and the second-order cone membership shown above \cite[Lemma~6.18]{Jiang2023}.
\end{proof}
We now have a procedure for obtaining a separating hyperplane from the optimal FISTA residual $\v_D = (s^*,t^*,\w^*)$. Simply choose any $h \in (-s^*, t^*)$ and the corresponding separating hyperplane is $\{\z : (\w^*)^T\z = h\}$. However, $\v_D$ is only guaranteed to be obtained from FISTA in the limit. Our goal is to find a separator after finitely many iterations, without computing $\v_D$ exactly. To this end, write
\[
    \v_k = \begin{pmatrix}
        s_k\\
        t_k\\
        \w_k
    \end{pmatrix},
\]
and consider the candidate separating hyperplane
\[
    H_k := \{\z : \w_k^T\z = m_k\},\qquad m_k := \frac{-s_k + t_k}{2}.
\]
The next theorem establishes how close $\v_k$ must be to $\v_D$ in order to guarantee that $H_k$ separates the two collections of ellipsoids.
\begin{theorem} \label{thm:vk-vd-approx-sep-bound}
  Suppose the two collections of ellipsoids are strictly separable. The residual $\v_k$ defines a strict separator $\{\z : \w_k^T\z = m_k\} = H_k$ provided
  \[
    \lVert\v_k - \v_D\rVert < \frac{\lVert\v_D\rVert^2}{\sqrt{18}}
  \]
\end{theorem}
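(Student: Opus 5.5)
The plan is to verify the two strict containments directly: $\w_k^T\z < m_k$ for every $\z\in C_1\cup\cdots\cup C_j$, and $\w_k^T\z > m_k$ for every $\z\in D_1\cup\cdots\cup D_l$. We compare $\v_k=(s_k,t_k,\w_k)$ with $\v_D=(s^*,t^*,\w^*)$, using Lemma~\ref{lem:vd-separates} together with the scaling assumption \eqref{ass}. Write $\Delta s := s_k - s^*$, $\Delta t := t_k - t^*$ and $\Delta\w := \w_k - \w^*$, so that $\|\v_k-\v_D\|^2 = \Delta s^2 + \Delta t^2 + \|\Delta\w\|^2$.

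First, we bound the points of the ellipsoids. If $\z\in C_i$, then $\z = \c_i + A_i\u$ with $\|\u\|\le 1$. By \eqref{ass}, $\|\z\|\le \|\c_i\|+\|A_i\|\le 2$, and the same bound holds for every $\z\in D_i$. For $\z\in C_i$, Lemma~\ref{lem:vd-separates} and Cauchy--Schwarz then give
\[
\w_k^T\z = (\w^*)^T\z + \Delta\w^T\z \le -s^* + 2\|\Delta\w\|.
\]
On the other side, $m_k = \tfrac12(-s^*+t^*) + \tfrac12(\Delta t - \Delta s)$. Hence $\w_k^T\z < m_k$ follows once
\[
-s^* - \tfrac12(-s^*+t^*) + 2\|\Delta\w\| + \tfrac12(|\Delta s|+|\Delta t|) < 0.
\]
Lemma~\ref{lem:vd-separates} gives $s^*+t^*=\|\v_D\|^2$, so this condition is equivalent to
\[
|\Delta s| + |\Delta t| + 4\|\Delta\w\| < \|\v_D\|^2 .
\]
The symmetric computation for $\z\in D_i$ uses $\w_k^T\z \ge t^* - 2\|\Delta\w\|$. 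It leads to exactly the same sufficient condition.

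Finally, apply Cauchy--Schwarz to the vector $(1,1,4)$ paired with $(|\Delta s|,|\Delta t|,\|\Delta\w\|)$:
\[
|\Delta s|+|\Delta t|+4\|\Delta\w\| \le \sqrt{1+1+16}\,\|\v_k-\v_D\| = \sqrt{18}\,\|\v_k-\v_D\|.
\]
This is strictly less than $\|\v_D\|^2$ under the hypothesis of the theorem, which completes the argument.

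The only mildly delicate point is tracking how the error in $m_k$ combines with the error in $\w_k$. It is this combination that produces the weights $(1,1,4)$ and hence the constant $\sqrt{18}$. Everything else follows directly from Lemma~\ref{lem:vd-separates} and the bound $\|\z\|\le 2$.
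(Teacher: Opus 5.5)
Your proposal is correct and follows essentially the same argument as the paper. You bound $\w_k^T\z$ using Lemma~\ref{lem:vd-separates}, Cauchy--Schwarz and $\|\z\|\le 2$, reduce both containments to $|s^*-s_k|+|t^*-t_k|+4\|\w_k-\w^*\|<s^*+t^*=\|\v_D\|^2$, and finish with Cauchy--Schwarz against the weights $(1,1,4)$ to obtain $\sqrt{18}$; your explicit check of the $D_i$ side, which the paper only asserts, is a small plus.
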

\begin{proof}
  For any $\z \in C_1\cup\hdots\cup C_j$, we have that $(\w^*)^T\z \leq -s^*$. This inequality holds true if and only if
  \[
    \w_k^T\z \leq m_k + (\w_k - \w^*)^T\z - s^* - m_k.
  \]
  Let
  \[
    r := \lVert\w_k - \w^*\rVert\cdot\lVert\z\rVert - \frac{s^* - s_k}{2} + \frac{t^* - t_k}{2} - \frac{s^* + t^*}{2}.
  \]
  By Cauchy-Schwarz, any $\z \in C_1\cup\hdots\cup C_j$ satisfies
  \begin{align*}
    \w_k^T\z &= m_k + (\w_k - \w^*)^T\z + {\w^*}^T\z - m_k\\
             &\leq m_k + \lVert\w_k - \w^*\rVert\cdot\lVert\z\rVert - s^* - m_k\\
             &= m_k + r.
  \end{align*}
  It follows that if we have $r < 0$, then $\w_k^T\z < m_k$. Under the normalization assumption \eqref{ass},
  we have that $\lVert\z\rVert \leq 2$, and it is easy to verify that $r < 0$ is implied by the condition
  \begin{equation} \label{eqn:splust-condition}
    s^* + t^* > \lvert s^* - s_k\rvert + \lvert t^* - t_k\rvert + 4\lVert\w_k - \w^*\rVert.
  \end{equation}
  The implication $\z \in D_1 \cup \hdots \cup D_l \implies \w_k^T\z > m_k$ holds under the same assumption.
  Therefore, the satisfaction of the condition \eqref{eqn:splust-condition} implies that the residual $\v_k$ defines
  a strict separator.

  Now observe the bound
  \[
    \lvert s^* - s_k\rvert + \lvert t^* - t_k\rvert + 4\lVert\w_k - \w^*\rVert \leq \sqrt{18}\lVert\v_k - \v_D\rVert
  \]
  holds, so a new condition for $\v_k$ defining a strict separator is
  \[
    \lVert\v_k - \v_D\rVert < \frac{s^* + t^*}{\sqrt{18}}.
  \]
  From Lemma~\ref{lem:vd-separates}, $s^* + t^* = \|\v_D\|^2$, which completes the proof.
\end{proof}
We have shown that sufficiently accurate FISTA residuals yield separating hyperplanes. In the next section we derive a bound on the number of FISTA iterations required before $\v_k$ is sufficiently accurate per Theorem~\ref{thm:vk-vd-approx-sep-bound}.

\section{A data-dependent iteration complexity bound} \label{sec:esp-data-dep-bound}
In order to derive a data-dependent bound on the number of FISTA iterations needed to obtain a separating hyperplane, we relate $\lVert\v_D\rVert$ to the distance between the convex hulls of the two ellipsoid collections.

Our strategy is to use a minimizer of \eqref{prob:socp-esp-dual2} to construct a point in the convex hull of the $C_i$ ellipsoids and another in the convex hull of the $D_i$ ellipsoids, whose distance can be bounded in terms of $\|\v_D\|$. Combining this geometric bound with the convergence rate of FISTA provides an iteration complexity bound that depends on the separation between the two collections.

Let $\x^* \in \mC$ such that $\b - \mA\x^* = \v_D$, which exists by Theorem~\ref{thm:pdhg-to-fista}. Let
\[
  \x^* := \begin{pmatrix}
    \lambda_1^*\\
    \p_1^*\\
    \vdots\\
    \lambda^*_j\\
    \p^*_j\\
    \mu^*_1\\
    \q^*_1\\
    \vdots\\
    \mu^*_l\\
    \q^*_l
  \end{pmatrix}\quad\quad\text{and}\quad\quad \hat{\x} := \begin{pmatrix}
    \hat{\lambda}_1\\
    \hat{\p}_1\\
    \vdots\\
    \hat{\lambda}_j\\
    \hat{\p}_j\\
    \hat{\mu}_1\\
    \hat{\q}_1\\
    \vdots\\
    \hat{\mu}_l\\
    \hat{\q}_l
  \end{pmatrix},
\]
where $\blambda^* = (\lambda_1^*,\hdots,\lambda_j^*)$, $\hat{\blambda} = (\hat{\lambda}_1,\hdots,\hat{\lambda}_j)$, $\bmu^* = (\mu_1^*,\hdots,\mu_l^*)$, $\hat{\bmu} = (\hat{\mu}_1,\hdots,\hat{\mu}_l)$, and
\begin{align*}
  &\hat{\blambda} := \proj_{\Delta^j}(\blambda^*)\\
  &\hat{\bmu} := \proj_{\Delta^l}(\bmu^*)\\
  &\hat{\p}_i := \begin{cases}
    \frac{\hat{\lambda}_i}{\lambda_i^*}\p_i^*, & \hat{\lambda}_i < \lambda_i^*;\\
    \p_i^*, & \text{otherwise}
  \end{cases}\\
  &\hat{\q}_i := \begin{cases}
    \frac{\hat{\mu}_i}{\mu_i^*}\q_i^*, & \hat{\mu}_i < \mu_i^*\\
    \q_i^*, & \text{otherwise},
  \end{cases}
\end{align*}
where $\Delta^t = \{\blambda \in \R^t : \lambda_i \geq 0,\ \forall i \in [t],\quad \sum_{i=1}^t \lambda_i = 1\}$. By construction, $\hat{\x} \in \mC$.

\begin{lemma} \label{lem:simplex-proj}
  $\lVert\blambda^* - \hat{\blambda}\rVert_1 = \lvert 1 - \e^T\blambda^*\rvert = \lvert s^*\rvert$ and $\lVert\bmu^* - \hat{\bmu}\rVert_1 = \lvert 1 - \e^T\bmu^*\rvert = \lvert t^*\rvert$.
\end{lemma}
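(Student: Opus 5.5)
The proof has two parts. The first identifies $s^*$ and $t^*$ with $1-\e^T\blambda^*$ and $1-\e^T\bmu^*$. The second shows that projecting a nonnegative vector onto the simplex moves it by exactly $|1-\e^T\blambda^*|$ in $\ell_1$. The $\bmu$ case is identical to the $\blambda$ case, so we treat only $\blambda$.

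For the first part, note that the first row of $\mA$ is $(1,\bz^T,1,\bz^T,\hdots)$ on the $\lambda$ blocks and zero elsewhere; this is the first equation of \eqref{eqn:esp-explicit-constraint}. Hence the first component of $\b-\mA\x^*$ is $1-\sum_i\lambda_i^* = 1-\e^T\blambda^*$. Since $\b-\mA\x^*=\v_D=(s^*,t^*,\w^*)$, we get $s^*=1-\e^T\blambda^*$. Likewise, the second row gives $t^*=1-\e^T\bmu^*$. This settles the second equality in each chain.

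For the first equality we use $\x^*\in\mC$, so $\lambda_i^*\ge\|\p_i^*\|\ge 0$ and $\blambda^*\ge\bz$. Recall that the Euclidean projection onto $\Delta^j$ has the form $\hat\lambda_i=\max(\lambda_i^*-\theta,0)$ for a scalar $\theta$ chosen so that $\sum_i\hat\lambda_i=1$. We split into two cases according to the sign of $\theta$, which is determined by the sign of $1-\e^T\blambda^*$.
\begin{itemize}
\item If $\e^T\blambda^*\ge 1$, then $\theta\ge 0$. Because $\blambda^*\ge\bz$, we have $\hat\lambda_i\le\lambda_i^*$ for every $i$. Then $\|\blambda^*-\hat\blambda\|_1=\sum_i(\lambda_i^*-\hat\lambda_i)=\e^T\blambda^*-1$.
\item If $\e^T\blambda^*<1$, then $\theta<0$ (the map $\theta\mapsto\sum_i\max(\lambda_i^*-\theta,0)$ is nonincreasing and equals $\e^T\blambda^*<1$ at $\theta=0$). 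Since $\lambda_i^*\ge 0>\theta$, every $\hat\lambda_i=\lambda_i^*-\theta>0$, so $\hat\lambda_i\ge\lambda_i^*$ for all $i$. Then $\|\blambda^*-\hat\blambda\|_1=1-\e^T\blambda^*$.
\end{itemize}
In both cases $\|\blambda^*-\hat\blambda\|_1=|1-\e^T\blambda^*|$.

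The only subtle point is the sign-consistency claim that all coordinates move in the same direction, since the $\ell_1$ distance is otherwise only bounded below by the change in sum. This relies essentially on $\blambda^*\ge\bz$, which comes from the cone constraint. If one prefers not to use the explicit $\theta$-formula, the same monotonicity can be derived from the KKT conditions of the projection problem.
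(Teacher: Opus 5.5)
Your proposal is correct and follows the paper's proof essentially step for step. You read off $s^*=1-\e^T\blambda^*$ from the first row of $\mA$, then use the thresholding form $\hat\lambda_i=\max\{\lambda_i^*-\theta,0\}$ with $\blambda^*\ge\bz$ (from the cone constraint) to show all differences $\lambda_i^*-\hat\lambda_i$ share a sign, so the $\ell_1$ norm collapses to $|1-\e^T\blambda^*|$; your monotonicity argument for the sign of $\theta$ spells out a step the paper only asserts.
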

\begin{proof}
    We will prove $\lVert\blambda^* - \hat{\blambda}\rVert_1 = \lvert 1 - \e^T\blambda^*\rvert = \lvert s^*\rvert$ only, since the proof of the second expression is analogous.

    By definition of $\mA$ and the identity $(s^*,t^*,\w^*) = \v_D = \b - \mA\x^*$, we have $s^* = 1 - \e^T\blambda^*$.

    Recall $\hat{\blambda} := \proj_{\Delta^j}(\blambda^*)$ is given by
    \[
        \hat{\lambda}_i = \max\{\lambda_i^* - \theta, 0\},
    \]
    where $\theta \in \R$ is chosen to make $\sum_{i=1}^j \hat{\lambda}_i = 1$. Since $\lambda^*_i \geq \|\p_i^*\|$, we have $\lambda^*_i \geq 0$.
    
    If $\sum_{i=1}^j \lambda^*_i \geq 1$, then $\theta \geq 0$, so for all $i$
    \[
        \hat{\lambda}_i \leq \lambda_i^*
    \]
    On the other hand, if $\sum_{i=1}^j \lambda^*_i < 1$, then $\theta < 0$, so for all $i$
    \[
        \hat{\lambda}_i \geq \lambda_i^*.
    \]
    It follows that all $\lambda_i^* - \hat{\lambda}_i$ must have the same sign for all $i$. So we may write
    \begin{align*}
        \|\blambda^* - \hat{\blambda}\|_1 &= \sum_{i=1}^j|\lambda_i^* - \hat{\lambda}_i| = \left|\sum_{i=1}^j\left(\lambda_i^* - \hat{\lambda}_i\right)\right| = |1-\e^T\blambda^*|.
    \end{align*}
\end{proof}

\begin{lemma} \label{lem:pq-diff}
  $\lVert\hat{\p}_i - \p_i^*\rVert \leq \lvert\hat{\lambda}_i - \lambda_i^*\rvert$ and $\lVert\hat{\q}_i - \q_i^*\rVert \leq \lvert\hat{\mu}_i - \mu_i^*\rvert$.
\end{lemma}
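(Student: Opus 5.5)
We argue componentwise by splitting on the two cases in the definition of $\hat{\p}_i$; the bound for $\hat{\q}_i$ is identical, with $\mu_i^*,\hat{\mu}_i,\q_i^*$ in place of $\lambda_i^*,\hat{\lambda}_i,\p_i^*$.

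\emph{Case $\hat{\lambda}_i \geq \lambda_i^*$.} By definition $\hat{\p}_i = \p_i^*$. Hence $\lVert\hat{\p}_i - \p_i^*\rVert = 0 \leq \lvert\hat{\lambda}_i - \lambda_i^*\rvert$ trivially.

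\emph{Case $\hat{\lambda}_i < \lambda_i^*$.} Here $\lambda_i^* > \hat{\lambda}_i \geq 0$, since $\hat{\blambda}\in\Delta^j$. So $\lambda_i^*>0$ and the scaling $\hat{\p}_i = \frac{\hat{\lambda}_i}{\lambda_i^*}\p_i^*$ is well defined. We compute
\[
  \lVert\hat{\p}_i - \p_i^*\rVert = \left(1 - \frac{\hat{\lambda}_i}{\lambda_i^*}\right)\lVert\p_i^*\rVert = \frac{\lambda_i^* - \hat{\lambda}_i}{\lambda_i^*}\lVert\p_i^*\rVert .
\]
The factor $1-\hat{\lambda}_i/\lambda_i^*$ lies in $(0,1]$, so no absolute value is needed. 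The key input is the cone constraint $\x^*\in\mC$, which gives $\lVert\p_i^*\rVert \leq \lambda_i^*$. Therefore $\lVert\p_i^*\rVert/\lambda_i^* \leq 1$, and
\[
  \lVert\hat{\p}_i - \p_i^*\rVert \leq \lambda_i^* - \hat{\lambda}_i = \lvert\hat{\lambda}_i - \lambda_i^*\rvert .
\]

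There is no real obstacle. The only points needing care are checking that the division by $\lambda_i^*$ is legitimate, which follows from the nonnegativity of $\hat{\lambda}_i$ in the case considered, and invoking $\x^*\in\mC$, which holds by the choice of $\x^*$ (its existence is guaranteed by Theorem~\ref{thm:pdhg-to-fista}).
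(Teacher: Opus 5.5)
Your proof is correct and follows the paper's argument: the case $\hat{\lambda}_i \geq \lambda_i^*$ is trivial, and for $\hat{\lambda}_i < \lambda_i^*$ you combine the scaling identity with the cone constraint $\lVert\p_i^*\rVert \leq \lambda_i^*$. Your extra check that $\lambda_i^* > \hat{\lambda}_i \geq 0$, so that the division is well defined, is a small detail the paper leaves implicit.
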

\begin{proof}
  If $\hat{\lambda}_i \geq \lambda_i^*$, then the inequality is clear. If $\hat{\lambda}_i < \lambda_i^*$, then $\hat{\p}_i = \hat{\lambda}_i\p_i^*/\lambda_i^*$, so
  $\lVert\hat{\p}_i - \p_i^*\rVert = \lVert(\hat{\lambda}_i - \lambda_i^*)\p_i^*/\lambda_i^*\rVert \leq \lvert\hat{\lambda}_i - \lambda_i^*\rvert$, since $\lVert \p_i^*\rVert \leq \lambda_i^*$.
  The proof of $\lVert\hat{\q}_i - \q_i^*\rVert \leq \lvert\hat{\mu}_i - \mu_i^*\rvert$ is analogous.
\end{proof}

\begin{theorem} \label{thm:slab-width}
  Define $\delta_{\mA}$ to be the distance between the two convex hulls
  \[
    \delta_{\mA} = \dist\left(\conv\left(\bigcup_{i=1}^j C_i\right), \conv\left(\bigcup_{i=1}^l D_i\right)\right).
  \]
  Then $\delta_{\mA} \leq 3\lVert\v_D\rVert$.
\end{theorem}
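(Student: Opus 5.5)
Starting from $\hat{\x}\in\mC$, I will build a point of each convex hull: let
\[
  \u := \sum_{i=1}^j\left(\hat{\lambda}_i\c_i + A_i\hat{\p}_i\right), \qquad \u' := \sum_{i=1}^l\left(\hat{\mu}_i\d_i - B_i\hat{\q}_i\right).
\]
Since $\hat{\blambda}\in\Delta^j$ and $\|\hat{\p}_i\|\le\hat{\lambda}_i$, whenever $\hat{\lambda}_i>0$ the point $\c_i + A_i(\hat{\p}_i/\hat{\lambda}_i)$ lies in $C_i$. Terms with $\hat{\lambda}_i=0$ have $\hat{\p}_i=\bz$ and drop out. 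Hence $\u$ is a convex combination of points of the $C_i$, so $\u\in\conv(\bigcup C_i)$. The same argument gives $\u'\in\conv(\bigcup D_i)$; the sign of $-B_i\hat{\q}_i$ is harmless because $\|-\hat{\q}_i\|\le\hat{\mu}_i$. Therefore $\delta_{\mA}\le\|\u-\u'\|$.

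Next I compare $\u - \u'$ with the $\w$-block of the residual. From $\v_D = \b-\mA\x^*$ and \eqref{eqn:esp-explicit-constraint},
\[
  \w^* = -\Bigl(\sum_i(\lambda^*_i\c_i + A_i\p^*_i) - \sum_i(\mu^*_i\d_i - B_i\q^*_i)\Bigr),
\]
possibly up to a global sign convention, which does not affect norms. Thus
\[
  \u-\u' = -\w^* + \sum_i\bigl((\hat{\lambda}_i-\lambda^*_i)\c_i + A_i(\hat{\p}_i-\p^*_i)\bigr) - \sum_i\bigl((\hat{\mu}_i-\mu^*_i)\d_i - B_i(\hat{\q}_i-\q^*_i)\bigr).
\]
Assumption \eqref{ass} gives $\|\c_i\|,\|A_i\|,\|\d_i\|,\|B_i\|\le1$, and Lemma~\ref{lem:pq-diff} gives $\|\hat{\p}_i-\p^*_i\|\le|\hat{\lambda}_i-\lambda^*_i|$ (similarly for $\q$). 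Applying the triangle inequality then yields
\[
  \|\u-\u'\| \le \|\w^*\| + 2\|\blambda^*-\hat{\blambda}\|_1 + 2\|\bmu^*-\hat{\bmu}\|_1 .
\]
By Lemma~\ref{lem:simplex-proj}, this becomes $\|\w^*\| + 2|s^*| + 2|t^*|$.

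The remaining step is to bound $\|\w^*\|+2|s^*|+2|t^*|$ by $3\|\v_D\|$. The crude bound $\|\w^*\|+2(|s^*|+|t^*|)\le\|\w^*\|+2\sqrt2\sqrt{s^{*2}+t^{*2}}$ only gives about $3.06\|\v_D\|$, so the constant needs care. Cauchy--Schwarz with weights $(1,2,2)$ gives
\[
  \|\w^*\|+2|s^*|+2|t^*| \le \sqrt{1+4+4}\,\sqrt{\|\w^*\|^2+s^{*2}+t^{*2}} = 3\|\v_D\|,
\]
which is exactly the claimed constant. I expect the main obstacle to be bookkeeping rather than this final inequality: getting the sign of $\w^*$ right, and checking that $\u$ truly lies in the hull when some $\hat{\lambda}_i=0$ (using $\hat{\p}_i=\bz$ in that case, which holds because either $\hat{\lambda}_i<\lambda^*_i$ so $\hat{\p}_i$ scales to $\bz$, or $\lambda^*_i=0$ so $\p^*_i=\bz$).
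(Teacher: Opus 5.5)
Your proof is correct and follows the paper's argument essentially step for step. You use the same hull points built from $\hat{\x}$ (your $\u'$ is the paper's $\v$, and $\u'-\u$ is the paper's $\r$) and the same triangle-inequality bound via Lemmas~\ref{lem:simplex-proj} and~\ref{lem:pq-diff}, and your weighted Cauchy--Schwarz step supplies the justification for $\|\w^*\|+2|s^*|+2|t^*|\le 3\|\v_D\|$ that the paper leaves implicit. One aside is wrong but harmless: the intermediate bound $\|\w^*\|+2\sqrt{2}\sqrt{s^{*2}+t^{*2}}$ loses nothing, since Cauchy--Schwarz with weights $(1,2\sqrt{2})$ bounds it by $\sqrt{1+8}\,\|\v_D\|=3\|\v_D\|$ as well, not roughly $3.06\|\v_D\|$.
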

\begin{proof}
  Since $\b - \mA\x^* = \v_D = (s^*,t^*,\w^*)$, we have that $\b - \mA\hat{\x} = (0,0,\r)$, where
  \[
    \r := \w^* + \sum_{i=1}^j\big((\lambda_i^* - \hat{\lambda}_i)\c_i + A_i(\p_i^* - \hat{\p}_i)\big) + \sum_{i=1}^l\big(-(\mu_i^* - \hat{\mu}_i)\d_i + B_i(\q_i^* - \hat{\q}_i)\big).
  \]
  From Lemma~\ref{lem:simplex-proj}, 
  \[
    \sum_{i=1}^j \lvert\lambda_i^* - \hat{\lambda}_i\rvert \leq \lvert s^*\rvert,\quad \sum_{i=1}^l \lvert\mu_i^* - \hat{\mu}_i\rvert \leq \lvert t^*\rvert,
  \]
  so from Lemma~\ref{lem:pq-diff}, we have that
  \[
    \sum_{i=1}^j \lVert\p_i^* - \hat{\p}_i\rVert \leq \lvert s^*\rvert,\quad \sum_{i=1}^l \lVert\q_i^* - \hat{\q}_i\rVert \leq \lvert t^*\rvert.
  \]
  Also by the normalization assumption \eqref{ass}, $\lVert\c_i\rVert \leq 1$, $\lVert A_i\rVert \leq 1$, $\lVert\d_i\rVert \leq 1$, $\lVert B_i\rVert \leq 1$.
  These inequalities allow us to conclude that
  \[
    \lVert\r\rVert \leq \lVert\w^*\rVert + 2\lvert s^*\rvert + 2\lvert t^*\rvert \leq 3\lVert\v_D\rVert.
  \]
    Since $\hat{\x}\in\mC$, $\|\hat{\p}_i\| \leq \hat{\lambda}_i$. Define for all $i \in [j]$ with $\hat{\lambda}_i > 0$
    \[
        \z_i := \c_i + \frac{1}{\hat{\lambda}_i}A_i\hat{\p}_i.
    \]
    Then
    \[
        \|A_i^{-1}(\z_i - \c_i)\| \leq 1,
    \]
    so $\z_i \in C_i$. Hence, we can define
    \[
        \u:=\sum_{i=1}^j(\hat{\lambda}_i\c_i+A_i\hat{\p}_i),
    \]
    such that $\u = \sum_{i : \hat{\lambda}_i > 0} \hat{\lambda}_i z_i$. Since $\sum_{i=1}^j\hat{\lambda}_i = 1$ and $\hat{\lambda}_i \geq 0$, it follows that
    \[
        \u \in \conv\left(\bigcup_{i=1}^j C_i\right).
    \]
    A similar construction shows that
    \[
        \v:=\sum_{i=1}^l
        (\hat{\mu}_i\d_i-B_i\hat{\q}_i)
        \in\conv\left(\bigcup_{i=1}^l D_i\right).
    \]
    By the definition of $\r$, we have $\r = \v - \u$. Consequently,
    \[
        \delta_{\mA} \leq\|\u-\v\| = \|\r\| \leq 3\|\v_D\|.
    \]
\end{proof}

\begin{corollary} \label{cor:vk-vd-data-bound}
  Suppose the two collections of ellipsoids are strictly separable. The residual $\v_k = \b - \mA\x_k = (s_k,t_k,\w_k)$ gives rise to the strict separator $H_k$, provided
  \[
    \lVert\v_k - \v_D\rVert < \frac{\delta_{\mA}^2}{27\sqrt{2}}.
  \]
\end{corollary}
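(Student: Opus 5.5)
The corollary should follow by chaining Theorem~\ref{thm:slab-width} with Theorem~\ref{thm:vk-vd-approx-sep-bound}. Theorem~\ref{thm:vk-vd-approx-sep-bound} says that, when the collections are strictly separable, $H_k$ is a strict separator whenever $\lVert\v_k - \v_D\rVert < \lVert\v_D\rVert^2/\sqrt{18}$. So it suffices to show that the hypothesis of the corollary implies this inequality. Equivalently, we need $\delta_{\mA}^2/(27\sqrt{2}) \le \lVert\v_D\rVert^2/\sqrt{18}$.

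First, Theorem~\ref{thm:slab-width} gives $\delta_{\mA} \le 3\lVert\v_D\rVert$. Squaring yields $\delta_{\mA}^2 \le 9\lVert\v_D\rVert^2$, that is, $\lVert\v_D\rVert^2 \ge \delta_{\mA}^2/9$. Since $\sqrt{18} = 3\sqrt{2}$, we obtain
\[
\frac{\lVert\v_D\rVert^2}{\sqrt{18}} \ge \frac{\delta_{\mA}^2}{9\cdot 3\sqrt{2}} = \frac{\delta_{\mA}^2}{27\sqrt{2}}.
\]
Hence, if $\lVert\v_k - \v_D\rVert < \delta_{\mA}^2/(27\sqrt{2})$, then $\lVert\v_k-\v_D\rVert < \lVert\v_D\rVert^2/\sqrt{18}$. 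Theorem~\ref{thm:vk-vd-approx-sep-bound} then shows that $H_k$ strictly separates the two collections.

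There is no real obstacle here; the argument is direct composition of the two theorems. The one point to check is that strict separability is assumed, so Theorem~\ref{thm:vk-vd-approx-sep-bound} and Lemma~\ref{lem:vd-separates} apply. Strict separability also gives $\delta_{\mA} > 0$, since the convex hulls are compact and disjoint, so the threshold is positive and the condition is not vacuous.
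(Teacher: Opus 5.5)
Your proposal is correct and matches the paper's proof: you combine $\lVert\v_D\rVert \geq \delta_{\mA}/3$ from Theorem~\ref{thm:slab-width} with the sufficient condition $\lVert\v_k - \v_D\rVert < \lVert\v_D\rVert^2/\sqrt{18}$ of Theorem~\ref{thm:vk-vd-approx-sep-bound}, and use $\sqrt{18} = 3\sqrt{2}$ to obtain the threshold $\delta_{\mA}^2/(27\sqrt{2})$. Your remark that $\delta_{\mA} > 0$ under strict separability is not needed for the implication, but it correctly shows that the condition is not vacuous.
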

\begin{proof}
  We have shown in Theorem~\ref{thm:vk-vd-approx-sep-bound} that $\v_k$ is a strict separator provided $\lVert\v_k - \v_D\rVert < \lVert\v_D\rVert^2/\sqrt{18}$.
  We have also shown in Theorem~\ref{thm:slab-width} that we can lower bound $\lVert\v_D\rVert$ by the distance between the two convex hulls $\delta_{\mA}$: $\lVert\v_D\rVert \geq \delta_{\mA}/3$.
  Hence, to ensure $\lVert\v_k - \v_D\rVert < \lVert\v_D\rVert^2/\sqrt{18}$, we require that
  \[
    \lVert\v_k - \v_D\rVert < \frac{\delta_{\mA}^2}{27\sqrt{2}}.
  \]
\end{proof}
\begin{theorem}
  Suppose the two collections of ellipsoids are strictly separable. FISTA applied to the reformulation of the ESP \eqref{prob:socp-esp-dual2} with $\x_0 = \bz$ finds a separating hyperplane in at most
  \[
    \Bigg\lceil\frac{108\sqrt{6(j+l)}}{\delta_{\mA}^2}\Bigg\rceil
  \]
  iterations.
\end{theorem}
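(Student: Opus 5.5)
Our plan is to combine the FISTA objective rate with the inequality from the proof of Theorem~\ref{thm:pdhg-to-fista} and then apply Corollary~\ref{cor:vk-vd-data-bound}.

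\emph{Step 1.} That proof showed that for every $\x\in\mC$,
\[
  \lVert\b-\mA\x-\v_D\rVert^2\le \lVert\mA\x-\b\rVert^2-\lVert\mA\bar{\x}-\b\rVert^2 = 2\bigl(f(\x)-f(\bar{\x})\bigr).
\]
FISTA iterates stay in $\mC$, since each is a projection onto $\mC$. Hence $\lVert\v_k-\v_D\rVert^2\le 2(f(\x_k)-f^*)$.

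\emph{Step 2.} The standard FISTA rate (cf.~\cite{FISTA}) with $\x_0=\bz$ gives
\[
  f(\x_k)-f^*\le \frac{2L\lVert\bar{\x}\rVert^2}{(k+1)^2},\qquad L=\lVert\mA\rVert^2 .
\]
Therefore
\[
  \lVert\v_k-\v_D\rVert\le \frac{2\lVert\mA\rVert\,\lVert\bar{\x}\rVert}{k+1}.
\]

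\emph{Step 3.} We need explicit bounds on $\lVert\mA\rVert$ and on $\lVert\bar{\x}\rVert$ for some minimizer $\bar{\x}$.

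For $\lVert\mA\rVert$, each block $\bar A_i^T$ has norm at most about $\sqrt{2}$ under \eqref{ass}. The blocks act on the shared variable $(s,t,\w)$ and are stacked, so $\lVert\mA\rVert^2\le \sum_i\lVert\bar A_i\rVert^2\le 2(j+l)$. A sharper per-block estimate may be needed; we keep $\lVert\mA\rVert\le\sqrt{2(j+l)}$ as the target.

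For $\lVert\bar{\x}\rVert$, we need a minimizer of bounded norm. The natural candidate is the projected point $\hat{\x}$, or some point with $\sum\lambda_i=\sum\mu_i=1$ and $\lVert\p_i\rVert\le\lambda_i$, $\lVert\q_i\rVert\le\mu_i$. Such a point has $\lVert\x\rVert^2\le\sum 2\lambda_i^2+\sum 2\mu_i^2\le 4$.

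The difficulty is that $\bar{\x}$ must be an actual minimizer, not just a nearby point. Instead, we use the general form of the FISTA bound with any reference point $\x^\star$ attaining the optimal residual. We also know that $\mA\bar{\x}$ is unique. So we aim to show that some minimizer satisfies $\sum\lambda_i\le 1$ and $\sum\mu_i\le1$, using $s^*,t^*\ge0$. This should follow from $\v_D\in(\mA(\mC))^\circ$ together with $\b^T\v_D=\lVert\v_D\rVert^2$, probably with a slight loss that gives a bound like $\lVert\bar{\x}\rVert\le\sqrt{6}$ or similar.

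\emph{Step 4.} Requiring
\[
  \frac{2\sqrt{2(j+l)}\,\lVert\bar{\x}\rVert}{k+1}<\frac{\delta_{\mA}^2}{27\sqrt2},
\]
and substituting the Step~3 bounds, should produce
\[
  k\ge \frac{108\sqrt{6(j+l)}}{\delta_{\mA}^2}.
\]
Corollary~\ref{cor:vk-vd-data-bound} then gives that $H_k$ is a strict separator.

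\emph{Main obstacle.} We expect the hard part to be the norm bound on a minimizer $\bar{\x}$ in Step~3. The constants are tuned to it, and it requires showing that the optimal $\lambda^*,\mu^*$ have bounded total mass, since $\ker(\mA)\cap\mC=\{\bz\}$ rules out unbounded rays but gives no explicit constant. We would first try to bound $\e^T\blambda^*=1-s^*\le 1$ and $\e^T\bmu^*=1-t^*\le1$ via $s^*,t^*\ge 0$. If this fails for a specific minimizer, we would fall back on the compactness of the minimizer set combined with the structure of $\hat{\x}$.
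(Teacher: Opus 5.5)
\paragraph{Verdict.} Your Steps 1, 2 and 4 match the paper's proof. Step 3, which you rightly flag as the crux, has a genuine gap, and both of its ingredients are off as stated.

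\paragraph{The bound on $\|\bar\x\|$.} Your route cannot work. $s^*$ need not be nonnegative: Lemma~\ref{lem:vd-separates} only gives $s^*+t^*>0$. Moreover, $\mA\bar\x=\b-\v_D$ is the same for every minimizer, so whenever $s^*<0$ every minimizer has $\e^T\blambda=1-s^*>1$. Here is an instance satisfying \eqref{ass}. In $\R^1$ take $C_1=[0.1,0.3]$ and $D_1=[0.5,1.5]$, so $\c_1=0.2$, $A_1=0.1$, $\d_1=1$, $B_1=0.5$. Set $\alpha=0.3$, $\beta=0.5$ and $\gamma=(\beta-\alpha)/(1+\alpha^2+\beta^2)$. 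Then $\v_D=\gamma(-\alpha,\beta,1)\in(\mA(\mC))^\circ$ and $\b-\v_D=(1+\gamma\alpha,\,1-\gamma\beta,\,-\gamma)\in\mA(\mC)$ are orthogonal, so by Moreau $s^*=-\gamma\alpha<0$. Your fallback does not help either: compactness gives no explicit constant, and $\hat\x$ is not a minimizer.

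\paragraph{The missing idea.} You do not need the total masses to be at most $1$, only a bound on $\|\mA\bar\x\|$. By \eqref{eqn:vd-proj}, $\mA\bar\x=\proj_{\mA(\mC)}(\b)$. Since $\bz\in\mA(\mC)$, nonexpansiveness gives $\|\mA\bar\x\|\le\|\b\|=\sqrt2$. Your own estimate then continues on $\mC$ as $\|\x\|^2\le 2\sum\lambda_i^2+2\sum\mu_i^2\le 2\bigl((\e^T\blambda)^2+(\e^T\bmu)^2\bigr)\le 2\|\mA\x\|^2$, because $\e^T\blambda$ and $\e^T\bmu$ are the first two coordinates of $\mA\x$. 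Hence $\|\bar\x\|\le 2$.

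\paragraph{The bound on $\|\mA\|$.} Your target $\|\mA\|\le\sqrt{2(j+l)}$ is false under \eqref{ass}. Take $A_i=I$ and $\c_i$ a unit vector. On vectors $(s,t,w\c_i)$ the block $\bar A_i^T$ acts like $\begin{pmatrix}1&1\\0&1\end{pmatrix}$ on $(s,w)$, so $\|\bar A_i^T\|^2=(3+\sqrt5)/2>2$. The aggregate bound also fails on strictly separable data. In $\R^1$, $\c_1=-1$, $\d_1=1$, $A_1=B_1=0.9$ give $\|\mA\|^2\approx 4.24>4$. What Cauchy--Schwarz actually gives is $(s+\c_i^T\w)^2+\|A_i^T\w\|^2\le 2s^2+3\|\w\|^2$. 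Hence $\|\mA\|\le\sqrt{3(j+l)}$, which is the bound the paper's last step implicitly uses.

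\paragraph{Putting it together.} Then $2\|\mA\|\|\bar\x\|\le 4\sqrt{3(j+l)}$ gives exactly $108\sqrt{6(j+l)}/\delta_{\mA}^2$. Your constant came out right only because $\sqrt{2(j+l)}\cdot\sqrt6=\sqrt{3(j+l)}\cdot 2$: a wrong $\|\mA\|$ bound happened to cancel an unproved guess $\|\bar\x\|\le\sqrt6$. With the correct $\|\mA\|$ bound, your $\sqrt6$ would yield a worse constant.
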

\begin{proof}
  From Corollary~\ref{cor:vk-vd-data-bound}, we know that $\v_k = (s_k,t_k,\w_k)$ encodes a separating hyperplane when $\lVert\v_k - \v_D\rVert < \delta_{\mA}^2/(27\sqrt{2})$. We want to know
  when $\v_k$ lies in an open ball of radius $\delta_{\mA}^2/(27\sqrt{2})$ around $\v_D$. Recall (cf. \cite[Theorem 10.34]{Beck2017}) that for iterates
  $\x_k$ generated by FISTA and any $\bar{\x} \in \{\x \in \mC : \b - \mA\x = \v_D\}$,
  \[
    f(\x_k) - f(\bar{\x}) \leq \frac{2L\lVert\x_0 - \bar{\x}\rVert^2}{(k+1)^2},
  \]
  where $\x_0$ is the starting point of the algorithm and $L = \|\mA\|^2$ is the Lipschitz constant for $\nabla f$. We thus obtain
  \[
    f(\x_k) - f(\bar{\x}) \leq \frac{2\lVert\mA\rVert^2\lVert\x_0 - \bar{\x}\rVert^2}{(k+1)^2}.
  \]
  From the proof of Theorem~\ref{thm:pdhg-to-fista}, we have $\lVert\v_k - \v_D\rVert \leq \sqrt{2}\sqrt{f(\x_k) - f(\bar{\x})}$, so applying this to the above we obtain the bound
  \[
    \lVert\v_k - \v_D\rVert \leq \frac{2\lVert\mA\rVert\lVert\x_0 - \bar{\x}\rVert}{k+1}.
  \]
  For any $\x \in \mC$,
  \begin{align*}
    \|\x\|^2 &= \sum_{i=1}^j\left(\lambda_i^2 + \|\p_i\|^2\right) + \sum_{i=1}^l\left(\mu_i^2 + \|\q_i\|^2\right)\\
                          &\leq 2\sum_{i=1}^j \lambda_i^2 + 2\sum_{i=1}^l \mu_i^2\\
                          &\leq 2\left(\left(\sum_{i=1}^j \lambda_i\right)^2 + \left(\sum_{i=1}^l \mu_i\right)^2\right)\\
                          &\leq 2\|\mA\x\|^2.
  \end{align*}
  From \eqref{eqn:vd-proj}, $\mA\bar{\x} = \b - \v_D = \b - \proj_{\b - \mA(\mC)}(\bz) = \proj_{\mA(\mC)}(\b)$, and $\bz \in \mA(\mC)$, so by the nonexpansiveness of the projection
  \[
    \|\x_0 - \bar{\x}\| \leq \sqrt{2}\|\mA\bar{\x}\| \leq \sqrt{2}\|\proj_{\mA(\mC)}(\b) - \proj_{\mA(\mC)}(\bz)\| \leq \sqrt{2}\|\b\| = 2.
  \]
  It follows that we can guarantee that $\v_k$ lies in an open ball of radius $\delta_{\mA}^2/27\sqrt{2}$ around $\v_D$ when
  $4\lVert\mA\rVert/(k+1) < \delta_{\mA}^2/(27\sqrt{2})$. Under Assumption~\ref{ass}, a sufficient condition is
  \begin{equation*}
    k \geq \frac{108\sqrt{6(j+l)}}{\delta_{\mA}^2} \geq \frac{108\sqrt{2}\lVert\mA\rVert}{\delta_{\mA}^2}.
  \end{equation*}
\end{proof}

\paragraph{Verifying separation.}
Suppose the collections of ellipsoids are separable. For the FISTA residual $\v_k = (s_k,t_k,\w_k)$, the hyperplane $H_k$ strictly separates the collections of ellipsoids if and only if
\[
    \begin{aligned}
        \mathbf{c}_i^T\mathbf{w}_k+\lVert A_i^T\mathbf{w}_k\rVert & < m_k, && i=1,\ldots,j,\\
        \mathbf{d}_i^T\mathbf{w}_k-\lVert B_i^T\mathbf{w}_k\rVert & > m_k, && i=1,\ldots,l,
    \end{aligned}
\]
where the expressions on the left hand side are the maximum and minimum, respectively, of $\w_k^T\z$ over the corresponding ellipsoids (see, for example, \cite[Lemma 6.18]{Jiang2023}).
It follows that separation can be verified efficiently at each FISTA iteration without the knowledge of $\v_D$.

%\begin{acknowledgements}
%If you'd like to thank anyone, place your comments here
%and remove the percent signs.
%\end{acknowledgements}

% Authors must disclose all relationships or interests that 
% could have direct or potential influence or impart bias on 
% the work: 
%
% \section*{Conflict of interest}
% The authors declare that they have no conflicts/competing interests relevant to the content of this article.

\bibliographystyle{spmpsci}
\bibliography{assets/refs}
\end{document}